\newcommand{\Inf}{\textnormal{\textbf{Inf}}}
\newcommand{\Ex}{\mathbf{Ex}}
\newcommand{\infi}{\mathrm{inf}}
\newcommand{\skipmm}[1]{\hspace{-#1mm}}
\DeclareMathOperator{\A}{\mathcal{A}}
\DeclareMathOperator{\type}{type}
\begin{document}
\title{On the Turing complexity of learning finite families of algebraic structures\thanks{Bazhenov was supported by the Mathematical Center in Akademgorodok under agreement No.~075-15-2019-1613 with the Ministry of Science and Higher Education of the Russian Federation.
San Mauro was supported by the Austrian Science Fund FWF, project M 2461.}}
\titlerunning{Complexity of Learning Finite Families of Structures}
%
\author{Nikolay Bazhenov\inst{1}\orcidID{0000-0002-5834-2770} \and
Luca~San~Mauro\inst{2}\orcidID{0000-0002-3156-6870}
}

\authorrunning{N.~Bazhenov and L.~San Mauro}
%
\institute{Sobolev Institute of Mathematics, 4 Acad. Koptyug Ave., Novosibirsk,\\
630090, Russia\\
\email{bazhenov@math.nsc.ru} \\
\and
Institute of Discrete Mathematics and Geometry, Vienna University of Technology
\email{luca.sanmauro@gmail.com}}
\maketitle              
\begin{abstract}
In previous work, we have combined computable structure theory and algorithmic learning theory to study which families of algebraic structures are learnable in the limit (up to isomorphism). In this paper, we measure the computational power that is needed to learn finite families of structures. In particular, we prove that, if a  family of structures is both finite and learnable, then any oracle which computes the Halting set is able to achieve such a learning. On the other hand, we construct a pair of structures which is learnable but no computable learner can learn it.

\keywords{Inductive inference \and Algorithmic learning theory \and Computable structures \and Infinitary logic \and  Turing degrees.}
\end{abstract}

\section{Introduction}

Algorithmic learning theory, introduced by Gold~\cite{Gold67} in the 1960's,  comprises different formal models for
the inductive inference.  Broadly construed, this research program   deals with the question of how a
\emph{learner}, provided with more and more data about some
\emph{environment}, is eventually able to achieve systematic
knowledge about it (see \cite{Osh-Sto-Wei:b:86:stl} for an introduction to this area).
Most work in algorithmic learning theory concerns either  learning of recursive functions~\cite{zz-tcs-08} or learning of formal languages~\cite{lange2008learning}. These paradigms model the data to be learned as an
\emph{unstructured flow}. 
Yet, researchers have been also considering  (although less systematically) the learning of data embodied with a structural content, often focusing on special classes of algebraic structures: e.g., in a series of papers, Stephan and his collaborators considered commutative
rings~\cite{SV01}, trees~\cite{MS04}, vector spaces~\cite{HaSt07}, and matroids~\cite{GaoStephan-12}.

A broader pursuit is to develop a framework which can  be applied to arbitrary structures. This has been done first by Glymour~\cite{Gly:j:85} and later expanded by Martin and Osherson~\cite{Mar-Osh:b:98}. We contributed to this research thread with our own paradigm~\cite{FKS-19,bazhenov2020learning}, which is inspired by ideas and techniques from computable structure theory. Intuitively (but see below for formal details), we say that a countably infinite structure $\mathcal{A}$ is \emph{learnable} if one can eventually guess its isomorphism type by seeing larger and larger (but always finite) substructures of $\mathcal{A}$. As for classical paradigms, the emphasis is not on learning single structures but rather families of structures (the former case being trivial). 

In \cite{bazhenov2020learning}, we adopted infinitary logic to obtain a model-theoretic characterization of which families of structures are learnable. Remarkably, there are families of structures which are finite, up to isomorphism, and \emph{not} learnable; this contrasts with classical paradigms, since, e.g., any finite collection of recursive functions \emph{is} learnable. 

In this paper, we advance the knowledge about the learnability of finite families of structures. Specifically, by relying on Turing complexity we evaluate the computational power that is needed  to learn such families. We prove that there is a pair of structures which is learnable but no computable learner can learn it, solving a question left open in \cite{bazhenov2020learning}. 

\section{Preliminaries}  We assume that the reader is familiar with the basic notions of classical  computability theory; in any case,  our terminology and notations are standard and as in \cite{soare2016turing}.  In
particular, by $\{\varphi_e\}_{e\in\omega}$, $\{W_e\}_{e\in\omega}$, and $\{\Phi^X_e\}_{e\in\omega}$ we  denote a  uniformly computable list of, respectively, all partial computable functions, all computably enumerable (c.e.) sets, and all Turing
operators with oracle $X$.

\subsubsection*{Computable structures.}  A \emph{signature} $L$ lists all function symbols and relation symbols which characterize an algebraic structure. In this paper, we consider only \emph{relational signatures}, i.e., signatures with no function symbols. Furthermore, all our structures have  domain the set $\omega$ of the natural numbers. We say that two structures are \emph{copies} of each other if they are isomorphic. 
In computable structure theory, one measures the complexity of an $L$-structure $\mathcal{A}$ by identifying $\mathcal{A}$ with its \emph{atomic diagram},  i.e., the collection of atomic formulas  which are true of $\mathcal{A}$. Up to a suitable G\"odel numbering of $L$-formulas, the atomic diagram of $\mathcal{A}$ may be regarded as a subset of $\omega$: this provides a natural way of assigning to each structure a Turing degree $\mathbf{d}$, representing its algorithmic complexity. 
Any computable structure $\mathcal{A}$ in a relational signature can be presented
as an increasing union of its finite substructures
\[
\A\restriction_0 \ \subseteq \A\restriction_1 \ \subseteq  \ldots \subseteq \A\restriction_i \ \subseteq \ldots,
\]
where $\A\restriction_n$ denotes the restriction of $\A$ to the domain $\{0,1,\ldots,n\}$  and $\A=\bigcup \mathcal{A} \restriction_i$.  For more background about computable structures, see \cite{AK00,EG-00}.

\subsubsection*{Infinitary formulas.}
To assess the model-theoretic complexity of countable structures, it is common to work in the infinitary logic $\mathcal{L}_{\omega_1\omega}$, which allows to take the conjunctions or disjunctions of infinite sets of formulas. In particular, \emph{infinitary $\Sigma_n$ formulas} are defined as follows,
\smallskip

\begin{itemize}
	\item $\Sigma^{\inf}_0$  and $\Pi^{\inf}_0$ formulas are quantifier-free first-order formulas.
	
	\item A $\Sigma_{n+1}^{\inf}$ formula $\psi(\bar{x})$ is a countably infinite disjunction
	\[
		\underset{i\in I}{\bigvee\skipmm{5.5}\bigvee} \exists \bar y_i \xi_i(\bar x, \bar y_i),
	\]
	where each $\xi_i$ is a $\Pi^{\inf}_{n}$ formula.
	
	\item A $\Pi_{n+1}^{\inf}$ formula $\psi(\bar x)$ is a countably infinite conjunction
	\[
		\underset{i\in I}{\bigwedge\skipmm{5.5}\bigwedge} \forall \bar y_i \xi_i(\bar x, \bar y_i),
	\]
	where each $\xi_i$ is a $\Sigma^{\inf}_{n}$ formula.
\end{itemize}
Next, \emph{computable infinitary $\Sigma_n$ formulas} (or \emph{$\Sigma^c_n$ formulas}, for short) are defined in the same way as above, but requiring infinite conjunctions and disjunctions to range over c.e.\ sets of  (computable) formulas. Finally, computable infinitary formulas can be relativized to an arbitrary oracle $X$: the class of \emph{$X$-computable infinitary $\Sigma_n$~formulas} is denoted by $\Sigma^c_n(X)$. For more background about infinitary formulas, see~\cite{marker2016lectures}.

\subsection{Our framework (for  finite families)} We shall now revisit $\Inf\mathbf{Ex}_{\cong}$, 
the learning paradigm  presented in \cite{bazhenov2020learning}. Here, the exposition is somehow simplified by the fact that we will focus only on finite families of structures: this allows us to ignore how a given family is enumerated, which was a crucial source of complexity in \cite{bazhenov2020learning}. For current purposes, it is in fact enough to assume that any structure $\A$ gives rise to a corresponding   \emph{conjecture} $\ulcorner \A\urcorner$, to be understood as conveying the piece of information ``this is $\A$''. 

Suppose that $\mathbf{P}$ is the learning problem associated to  a finite family $\mathfrak{K}$ of (non-isomorphic) computable structures. The ingredients of our framework may be specified as follows. For $\mathbf{P}$,

\begin{itemize}
\item The \emph{learning domain} ($\mathrm{LD}$)  is the collection of all copies of the structures from $\mathfrak{K}$. That is,
\[
\mathrm{LD}(\mathfrak{K}):=\bigcup_{\A \in \mathfrak{K}} \{\mathcal{S} : \mathcal{S}\cong \A\}.
\]
\item The \emph{hypothesis space} ($\mathrm{HS}$) contains, for each $\A\in \mathfrak{K}$, a formal symbol $\ulcorner \A \urcorner$ (these symbols serve as conjectures about the isomorphism type of the observed structure) and a question mark symbol. That is, 
\[
\mathrm{HS}(\mathfrak{K}):=\{\ulcorner \A \urcorner : \A \in  \mathfrak{K} \}\cup \{?\}.
\]
\item A \emph{learner} $M$ sees, by stages, all positive and negative data about any given structure in the learning domain and is required to output conjectures. This is formalized by saying that $M$ is a function
\[
\mbox{from }\{\mathcal{S}\restriction_n \ \colon \mathcal{S}\in LD(\mathfrak{K}) \} \mbox{ to } \mathrm{HS}(\mathfrak{K}).
\]
\item The learning is \emph{successful} if, for each structure $\mathcal{S}\in\mathfrak{K}$, the learner eventually
stabilizes to a correct conjecture about its isomorphism type. That is, 
\[
\lim_{n\to \infty} M(\mathcal{S}\restriction_n)=\ulcorner \mathcal{A}\urcorner  \mbox{ if and only if $\mathcal{S}$ is a copy of $\A$}.
\]
We say that $\mathfrak{K}$ is \emph{learnable}, if some learner $M$ successfully learns $\mathfrak{K}$.
 \end{itemize}

%
%
%
%

\begin{remark}
The interested reader is referred to  \cite{bazhenov2020learning} for motivating examples and a detailed discussion about our framework. Note that in that paper, inspired by an established notation in algorithmic learning theory, we named our paradigm  \emph{$\Inf\Ex_{\cong}$-learning}. Here, since there is no risk of ambiguity, we just say that some family $\mathfrak{K}$ is learnable. 
\end{remark}

 In \cite{bazhenov2020learning}, we showed that asking if a family (possibly infinite) is learna\-ble is the same as asking whether the structures from $\mathfrak{K}$ can be distinguished by $\Sigma^{\inf}_2$ formulas. The next theorem is an immediate consequence of Theorem~3.1 and Corollary~4.1 of~\cite{bazhenov2020learning}.

\begin{theorem}[Bazhenov, Fokina, San Mauro] \label{theorem:characterization learning}
Let $\mathfrak{K}$ be a finite family of pairwise nonisomorphic structures $\{\A_0, \ldots, \A_n\}$. Then,
\begin{enumerate}
\item[(a)] $\mathfrak{K}$ is learnable  if and only if there are $\Sigma^{\inf}_2$ formulas $\phi_0,\ldots,\phi_n$ such that
\[
\A_i \models \phi_j \Leftrightarrow i=j.
\]
\item[(b)] $\mathfrak{K}$ is learnable via an $X$-computable learner  if and only if there are $\Sigma^c_2(X)$ formulas $\psi_0,\ldots,\psi_n$ such that
\[
\A_i \models \psi_j \Leftrightarrow i=j.
\]
\end{enumerate}
\end{theorem}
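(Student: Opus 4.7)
The theorem is stated as an immediate consequence of Theorem~3.1 and Corollary~4.1 of~\cite{bazhenov2020learning}, so the plan is to specialise the general $\Inf\Ex_\cong$-characterisation developed there to the finite case. I would begin by observing that, when $\mathfrak{K}$ is finite and composed of pairwise non-isomorphic structures, the enumeration dependency of the general criterion collapses: every structure has a unique index and the separation clause reduces precisely to the existence of $\Sigma^{\inf}_2$ (for part~(a)) or $\Sigma^c_2(X)$ (for part~(b)) formulas $\phi_j$ with $\A_i\models\phi_j\iff i=j$. The rest of the plan is to sanity-check both directions by describing a direct argument.

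For the $(\Leftarrow)$ direction, starting from distinguishing formulas, I would build the learner by expanding each $\phi_j$ as $\bigvee_k \exists\bar y\,\pi_{j,k}(\bar y)$ with $\pi_{j,k}\in\Pi^{\inf}_1$. On input $\mathcal{S}\restriction_s$ the learner enumerates triples $(j,k,\bar y)$ in a canonical order and outputs $\ulcorner\A_j\urcorner$ for the first triple for which no universal conjunct of $\pi_{j,k}(\bar y)$ has been falsified by an atomic instance within $\mathcal{S}\restriction_s$. By definition of $\Pi^{\inf}_1$ satisfaction, the correct witness survives in the limit, while every witness for $j'\ne j$ is eventually refuted. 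In the computable case, $\Sigma^c_2(X)$ ensures that the triples and the conjuncts to be refuted are $X$-enumerable, so the learner is $X$-computable.

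For the $(\Rightarrow)$ direction, starting from a learner $M$, I would synthesise $\phi_j$ via a structural locking lemma: in each copy of $\A_j$, $M$ eventually stabilises, and a diagonalisation argument shows that some finite labelled substructure is locking for $j$, meaning that every consistent extension of it is mapped by $M$ to $\ulcorner\A_j\urcorner$. The $\Sigma^{\inf}_2$ formula $\phi_j$ then posits the existence of a tuple realising a locking atomic type, together with the $\Pi^{\inf}_1$ clause ruling out all extensions on which $M$ would diverge. Relativising everything to $X$ makes the distinguished index sets $X$-c.e., yielding $\Sigma^c_2(X)$ formulas. The main obstacle is getting the locking lemma right in the structural setting: unlike in function learning, the ``text'' presented to $M$ is a specific increasing chain of initial segments, so one must ensure that a failure of locking in some copy of $\A_j$ allows a diagonal construction of another copy on which $M$ fails to converge to $\ulcorner\A_j\urcorner$, contradicting the learnability of $\mathfrak{K}$.
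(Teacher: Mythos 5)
Your proposal is correct and takes essentially the same approach as the paper: the paper offers no proof of this theorem at all, presenting it as an immediate specialisation of Theorem~3.1 and Corollary~4.1 of~\cite{bazhenov2020learning} to finite families of pairwise non-isomorphic structures, which is exactly your primary plan. Your supplementary direct sketch (a refutation-based learner for the $\Leftarrow$ direction and a locking-sequence argument for $\Rightarrow$) is consistent with how the cited results are established, so nothing further is required here.
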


\section{An upper bound to the learners' complexity}

According to our framework, a learner $M$ can be a function of any complexity. So, it is natural to ask how learnability is affected if we restrict our focus to learners of some bounded Turing complexity. More precisely, in this paper we want to understand, for a learnable family $\mathfrak{K}$, how powerful an oracle should be  to achieve such learning. The next theorem states that, if $\mathfrak{K}$ is finite, then $\mathbf{0}'$ suffices. Similar problems for classical learning paradigms have been intensively studied: e.g., Kummer and Stephan~\cite{kummer1996structure} showed, the whole of class of c.e.\
languages is learnable from informant relative to oracle $A$ if and only if the Halting problem
is Turing reducible to $A$ (for more results about learning with oracles, see  \cite{adleman1991inductive,slaman1991oracles,kummer1996structure}). 

\begin{theorem}\label{theo:upper-bound-finite-case}
	Let $\mathfrak{K}$ be a finite family of computable structures. If $\mathfrak{K}$ is learnable, then it is learnable by a $\mathbf{0}'$-computable learner.
\end{theorem}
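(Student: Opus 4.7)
The plan is to leverage the characterization in Theorem~\ref{theorem:characterization learning}. By part~(a), the hypothesis that $\mathfrak{K} = \{\A_0,\dots,\A_n\}$ is learnable yields $\Sigma^{\inf}_2$ sentences $\phi_0,\dots,\phi_n$ with $\A_i \models \phi_j$ iff $i=j$, while part~(b) reduces the desired conclusion to producing $\Sigma^c_2(\mathbf{0}')$ sentences $\psi_0,\dots,\psi_n$ enjoying the same separating property. Since $\Sigma^c_2(\mathbf{0}')$ is closed under finite conjunctions (distribute the outer existentials across the conjunction and note that a finite Cartesian product of $\mathbf{0}'$-c.e.\ index sets is again $\mathbf{0}'$-c.e.), it is enough to construct, for each pair $i\neq j$, a $\Sigma^c_2(\mathbf{0}')$ sentence $\chi_{i,j}$ satisfied by $\A_i$ but not by $\A_j$; one then sets $\psi_i := \bigwedge_{j\neq i} \chi_{i,j}$.

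For the pairwise separators I would exploit the computability of each $\A_i$. The key observation is that for any tuple $\bar a$ in $\A_i$, the finitary $\Pi_1$-type of $\bar a$ is $\Pi^0_1$---checking $\A_i \models \forall \bar z\, \eta(\bar a, \bar z)$ for quantifier-free $\eta$ is a $\Pi^0_1$ question about the (computable) atomic diagram---and is therefore enumerable from $\mathbf{0}'$. Hence the infinite conjunction $\pi_{\bar a}(\bar x)$ of all finitary $\Pi_1$ formulas satisfied by $\bar a$ in $\A_i$ is a genuine $\Pi^c_1(\mathbf{0}')$ formula, so $\exists \bar y\, \pi_{\bar a}(\bar y)$ is $\Sigma^c_2(\mathbf{0}')$. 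Choosing a witness tuple $\bar a \in \A_i$ for some disjunct $\exists \bar y\, \xi_{k_0}(\bar y)$ of $\phi_i$ that holds in $\A_i$, I propose $\chi_{i,j} := \exists \bar y\, \pi_{\bar a}(\bar y)$, which is satisfied in $\A_i$ by $\bar a$ itself.

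The main obstacle is to guarantee that $\A_j \not\models \chi_{i,j}$: realizing the \emph{finitary} $\Pi_1$-type of $\bar a$ is in general strictly weaker than satisfying the full infinitary $\Pi^{\inf}_1$-conjunct $\xi_{k_0}$, so a naive choice of $\bar a$ may well leave $\chi_{i,j}$ true in $\A_j$. To bridge this gap I would invoke the relativized back-and-forth analysis at level~$2$ in the style of Ash--Knight: since $\A_i \models \phi_i$ and $\A_j \not\models \phi_i$, the pair $(\A_j,\A_i)$ fails the $2$-back-and-forth relation, and for computable structures this failure can be witnessed effectively from $\mathbf{0}'$ by producing a tuple $\bar a \in \A_i$ whose complete $\Pi^c_1(\mathbf{0}')$-type is realized by no tuple in $\A_j$. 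Taking this richer $\Pi^c_1(\mathbf{0}')$-type in place of merely the finitary $\Pi_1$-type yields the required separator. This back-and-forth extraction, rather than the bookkeeping around Theorem~\ref{theorem:characterization learning}, is where I expect the technical core of the proof to concentrate.
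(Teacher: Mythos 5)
Your overall route is the paper's: reduce via Theorem~\ref{theorem:characterization learning} to producing $\Sigma^c_2(\mathbf{0}')$ separating sentences, and obtain them as $\exists \bar y\, \pi_{\bar a}(\bar y)$, where $\pi_{\bar a}$ is the conjunction of the (co-c.e., hence $\mathbf{0}'$-enumerable) finitary $\forall$-type of a suitably chosen witness tuple $\bar a$. The problem lies in your third paragraph: the ``main obstacle'' you identify is not an obstacle, and the back-and-forth machinery you invoke to overcome it is both unnecessary and not actually carried out. Since that is precisely where you locate the technical core, the verification that $\A_j \not\models \chi_{i,j}$ is, as written, missing from your argument.

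The point you overlook is definitional. In this paper a $\Pi^{\inf}_1$ formula $\xi_{k_0}(\bar x)$ is a countable conjunction $\bigwedge_{j\in J} \forall \bar y_j\, \theta_j(\bar x,\bar y_j)$ in which each $\theta_j$ is a \emph{finitary} quantifier-free formula; hence every single conjunct is itself a finitary $\forall$-formula. Consequently, once $\bar a$ is chosen (as you do) with $\A_i \models \xi_{k_0}(\bar a)$, all the conjuncts $\forall \bar y_j\, \theta_j$ lie in the finitary $\forall$-type of $\bar a$, and any tuple $\bar d$ in any structure realizing that type satisfies $\xi_{k_0}(\bar d)$. So $\A_j \models \exists \bar y\, \pi_{\bar a}(\bar y)$ would give $\A_j \models \exists \bar y\, \xi_{k_0}(\bar y)$ and hence $\A_j \models \phi_i$, contradicting the separation property of the $\phi$'s. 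Realizing the finitary $\forall$-type is thus \emph{stronger}, not weaker, than satisfying the infinitary conjunct; moreover it already coincides with realizing the full $\Pi^c_1(\mathbf{0}')$-type, since $\Pi^c_1$ formulas are themselves conjunctions of finitary $\forall$-formulas, so the ``richer type'' you propose buys nothing. With this one observation your construction closes exactly as in the paper, and the Ash--Knight detour (which, as sketched, would still need a genuine argument connecting failure of the level-$2$ back-and-forth relation for arbitrary, non-computable $\Sigma^{\inf}_2$ sentences to an effectively found tuple) can simply be deleted. The pairwise-conjunction bookkeeping is harmless but also unnecessary: a single disjunct of $\phi_i$ true in $\A_i$ is false in every $\A_j$ with $j\neq i$ simultaneously, which is how the paper proceeds.
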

\begin{proof}
	Suppose that $\mathfrak{K}$ is equal to $\{\mathcal{A}_0, \ldots, \mathcal{A}_n\}$. By item $(a)$ of Theorem~\ref{theorem:characterization learning}, there are $\Sigma^{\infi}_2$ sentences $\phi_0,\dots,\phi_n$ such that for all $i,j\leq n$,
	\begin{equation}\label{equ:property}
		\mathcal{A}_j \models \phi_i \ \Leftrightarrow\ j=i.
	\end{equation}
	
	We show that one can replace formulas $\phi_i$ with $\Sigma^c_2(\mathbf{0}')$ sentences $\psi_i$, while preserving Eq.~(\ref{equ:property}). Then item $(b)$ of Theorem \ref{theorem:characterization learning} will imply that the class $\mathfrak{K}$ is learnable by a $\mathbf{0}'$-computable learner. We describe the construction of $\psi_0$~--- the remaining $\psi_i$ can be recovered in a similar way.
	
	Suppose that $\phi_0$ is equal to
	\[
		\underset{i\in I}{\bigvee\skipmm{5.5}\bigvee} \exists \bar x_i \xi_i(\bar x_i),
	\]
	where every $\xi_i$ is a $\Pi^{\infi}_1$~formula. Without loss of generality, one may assume that $\mathcal{A}_0 \models \exists \bar x_0 \xi_0(\bar x_0)$. By Eq.~(\ref{equ:property}), we have 
	\begin{equation} \label{equ:auxil-upper}
		\mathcal{A}_{\ell} \not\models \exists \bar x_0 \xi_0(\bar x_0) \text{ for all } \ell\neq 0.
	\end{equation}
	
	Suppose
	\[
		\xi_0(\bar x_0) = \underset{j\in J}{\bigwedge\skipmm{5.5}\bigwedge} \forall \bar y_j \theta_j(\bar x_0, \bar y_j),
	\]
	where every $\theta_j$ is quantifier-free. 
	
	Choose a tuple $\bar c$ from $\mathcal{A}_0$ such that $\mathcal{A}_0 \models \xi_0(\bar c)$. Consider a set of formulas
	\begin{gather*}
		\type_{\forall}(\bar c) = \{ \chi(\bar x_0) \,\colon \chi \text{ is a finitary } \forall \text{-formula},\ \mathcal{A}_0 \models \chi(\bar c) \}.
	\end{gather*}
	Since $\mathcal{A}_0$ is a computable structure, the set $\type_{\forall}(\bar c)$ is co-c.e. Furthermore, $\{ \forall \bar y_j \theta_j(\bar x_0, \bar y_j)\,\colon j\in J\} \subseteq \type_{\forall}(\bar c)$. 
	\smallskip
	
	Consider a $\Sigma^c_2(\mathbf{0}')$ sentence
	\[
		\psi_0 := \exists \bar x_0 \underset{\chi \in type_{\forall}(\bar c)}{\bigwedge\skipmm{5.5}\bigwedge} \chi(\bar x_0).
	\]
	By employing Eq.~(\ref{equ:auxil-upper}), it is easy to show that $\mathcal{A}_0 \models \psi_0$ and $\mathcal{A}_{\ell} \not\models \psi_0$ for every $\ell> 0$. In other words, the formula $\psi_0$ satisfies the desired properties. We deduce that  $\mathfrak{K}$ is learnable by a $\mathbf{0}'$-computable learner.
	\qed
\end{proof}

The following question, which naturally originates from the theorem above, is left open.

\begin{question}
Is there an intermediate degree $\mathbf{0}<\mathbf{d}<\mathbf{0}'$ such that, if a finite family $\mathfrak{K}$ is learnable, then it is learnable by a $\mathbf{d}$-computable learner?
\end{question}

\section{A lower bound to the learners' complexity}
Having shown that an oracle as weak as $\mathbf{0}'$ is able to retrieve the learning process for any finite family which is learnable, one may ask whether, for finite families, learnability even coincides with \emph{computable} learnability. In this section, we show that this is not the case, by constructing a pair of structures which, although learnable, cannot be learned by any computable learner. This answers the question left open in \cite[p.~17]{bazhenov2020learning}. Our construction is based on a family of structures built by Alaev~\cite{Alaev-03}.


\begin{theorem}\label{theo:lower-bound-finite-case}
	There is a pair $\mathfrak{K}$ of computable structures  such that $\mathfrak{K}$ is learnable, but not learnable by a computable learner.
\end{theorem}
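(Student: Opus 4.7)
The plan is to apply Theorem~\ref{theorem:characterization learning}. By item~(a), a pair $\mathfrak{K} = \{\A_0, \A_1\}$ of computable non-isomorphic structures is learnable iff one can distinguish them by $\Sigma^{\inf}_2$ sentences; by item~(b), computable learnability amounts to having such sentences already in $\Sigma^c_2$. Hence it suffices to exhibit two computable, non-isomorphic structures $\A_0, \A_1$ whose $\Sigma^c_2$-theories coincide (or, more modestly, are one-sided included), while some $\Sigma^{\inf}_2$ sentence distinguishes them. Under such inclusion no $\Sigma^c_2$ sentence $\psi_0$ can witness $\A_0 \models \psi_0$ together with $\A_1 \not\models \psi_0$, which already kills item~(b); meanwhile the infinitary distinction yields item~(a) via a pair $\phi_0, \phi_1$.

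I would build $\A_0$ and $\A_1$ following the template of Alaev~\cite{Alaev-03}, whose construction is designed precisely to exhibit a gap between infinitary $\Sigma_2$ and effective $\Sigma^c_2$ descriptive power. The idea is to work in a suitable relational signature (a variant of equivalence structures augmented with unary marker predicates works well) and to arrange the single distinguishing feature of $\A_0$ over $\A_1$ as a $\Sigma^{\inf}_2$ sentence of the form \emph{``there exists an element whose local $\Pi_1$-type includes a non-c.e.\ list $\{\chi_n : n \in \omega\}$ of quantifier-free conditions''}. The non-c.e.\ character of the list is what rules out any $\Sigma^c_2$ replacement.

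The construction itself is a stagewise priority argument producing, for $b \in \{0,1\}$, two uniformly computable chains $\A_b \restriction_0\, \subseteq\, \A_b \restriction_1\, \subseteq \cdots$ of finite substructures. Two kinds of requirements are fulfilled. \emph{Coding requirements} build into $\A_0$, at carefully chosen stages, the elements needed to sustain the distinguishing infinitary pattern, while ensuring that no analogous pattern emerges in $\A_1$. \emph{Diagonalization requirements}, indexed by $e$, act against the $e$-th $\Sigma^c_2$ sentence $\psi_e$: using a $\mathbf{0}'$-approximation of $\Sigma^c_2$-truth, whenever $\psi_e$ appears satisfied in $\A_0$ by some witness $\bar c$, one later introduces in $\A_1$ a matching configuration, and symmetrically.

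The main obstacle is the tension between these two requirement types. The coding action must deposit in $\A_0$ a configuration that is genuinely non-effective---no c.e.\ list of existential formulas can capture it---while the diagonalization action must match every locally observable $\Sigma_2$ fact across the two structures. The delicacy is that each new element added to $\A_0$ for coding purposes risks triggering a $\Sigma^c_2$ sentence that $\A_1$ cannot comfortably mirror, and conversely each mirroring step in $\A_1$ risks inadvertently creating there the very pattern we wished to exclude. Managing this balance is the technical core of Alaev's construction; adapting it to the two-structure case considered here, and verifying the resulting agreement of $\Sigma^c_2$-theories, is what the proof must accomplish.
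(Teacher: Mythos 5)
Your high-level reduction is sound: by Theorem~\ref{theorem:characterization learning}, it suffices to produce two computable non-isomorphic structures that are separated by $\Sigma^{\inf}_2$ sentences but not by $\Sigma^c_2$ sentences, and you correctly point to Alaev's work as the relevant source. However, the proposal stops exactly where the proof has to begin: the ``stagewise priority argument'' with coding and diagonalization requirements is never specified, and you explicitly defer the technical core (``managing this balance \dots is what the proof must accomplish''). Since that construction \emph{is} the content of the theorem, this is a genuine gap rather than omitted routine detail. In particular, nothing in the proposal certifies that the diagonalization requirements (mirroring every $\Sigma^c_2$ witness across the two structures) can be met consistently with the coding requirements --- the very interaction you flag as delicate is left unresolved, so the claimed agreement (or one-sided inclusion) of $\Sigma^c_2$-theories is unestablished.

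It is also worth noting that the paper's proof avoids the machinery you propose. Rather than forcing $\Sigma^c_2$-theory agreement by a priority argument, it fixes a set $X \in \Delta^0_3 \setminus \Delta^0_2$ and builds a \emph{uniformly computable} sequence $(\mathcal{C}_k)_{k\in\omega}$ such that $\mathcal{C}_k \cong \mathcal{C}^{+}$ when $k\in X$ and $\mathcal{C}_k \cong \mathcal{C}^{-}$ when $k\notin X$; the building blocks are trees $\mathcal{T}[W]$ coding c.e.\ sets from Alaev's family $\mathcal{I}$ of sets of the form $\emptyset' \cup \{x\}$, combined with the $\Pi^0_2$-completeness of $\{e \,\colon W_e \subseteq \emptyset'\}$. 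Learnability is then witnessed by explicit $\Sigma^{\inf}_2$ sentences whose infinite conjunction ranges over the non-c.e.\ set $\overline{\emptyset'}$, while non-computable-learnability is immediate: a computable learner run on the sequence $(\mathcal{C}_k)$ would decide $X$ in the limit, making $X$ a $\Delta^0_2$ set by the Limit Lemma, so item~(b) of Theorem~\ref{theorem:characterization learning} is not even needed for the lower bound. If you want to complete your proposal, the cleanest repair is to replace the priority construction by this coding-plus-Limit-Lemma argument.
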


\begin{proof}
For the sake of exposition, the proof is split into several subsections. 

\subsection*{Preliminaries of the construction}

We treat trees as undirected graphs. Consider a relational signature $L_0 := \{ R^2, Q^1, U^1\}$. We define an auxiliary computable $L_0$-structure $\mathcal{V}$ as follows.
\begin{itemize}
	\item The domain of $\mathcal{V}$ is equal to
	\[
		\{r, c\} \cup \{ a_i, b_i\,\colon i\in \omega\} \cup \{ d_{i,j}\,\colon i \in\omega,\, j\leq i\}.
	\]
	
	\item $Q^{\mathcal{V}} = \{ c\}$ and $U^{\mathcal{V}} = \{ r\} \cup \{ a_i, b_i\,\colon i\in \omega\}$.
	
	\item The structure $(\mathrm{dom}(\mathcal{V}),R^{\mathcal{V}})$ is an undirected graph, which contains the following edges:
	\begin{itemize}
		\item $(r,a_i)$  and $(a_i,b_i)$ for all $i\in\omega$;
		
		\item $(a_i,d_{i,0})$, $(d_{i,0}, d_{i,1})$, $(d_{i,1}, d_{i,2})$, \dots, $(d_{i,i-1}, d_{i,i})$, $(d_{i,i},c)$.
	\end{itemize}
\end{itemize}

The informal idea behind the structure $\mathcal{V}$ is as follows. If we consider its substructure $\mathcal{S}$ on the domain $U^{\mathcal{V}}$, then $\mathcal{S}$ is a tree with root $r$ and infinitely many branches of size 2. 

For a given $i\in\omega$, the elements $d_{i,j}$, $j\leq i$, serve to distinguish $a_i$ in a first-order way. Let $\theta_i(x,y_0,\dots,y_i,z)$ be a quantifier-free formula, which is built as the conjunction of the following facts:
\begin{itemize}
	\item all elements $x,y_0,\dots,y_i,z$ are pairwise distinct;
	
	\item $U(x)$ and $Q(z)$;
	
	\item $\neg U(y_j)$ for all $j\leq i$;
	
	\item the sequence $x,y_0,\dots,y_i,z$ forms a chain with respect to the graph relation $R$.
\end{itemize}
Then we have:
\begin{equation}\label{equ:001-theta}
	\mathcal{V} \models (x = a_i) \leftrightarrow \exists y_0 \dots \exists y_i\exists z \theta_i(x,\bar y, z).
\end{equation}

\medskip

We use the structure $\mathcal{V}$ to code c.e.\ sets. For a c.e. set $W$, by $\mathcal{T}[W]$ we denote the substructure of $\mathcal{V}$ on the c.e. domain
\[
	\{r, c\} \cup \{ a_i \,\colon i\in \omega\} \cup \{ d_{i,j}\,\colon i \in\omega,\, j\leq i\} \cup \{b_i\,\colon i \in W\}.
\]
See Fig.~\ref{fig:tw} for an example of this encoding.
Without loss of generality, one may assume that the structure $\mathcal{T}[W]$ is computable. Furthermore, given an index $e\in\omega$, one can effectively find a computable index of the structure $\mathcal{T}[W_e]$.

\begin{figure}
\begin{center}
\includegraphics[scale=.8]{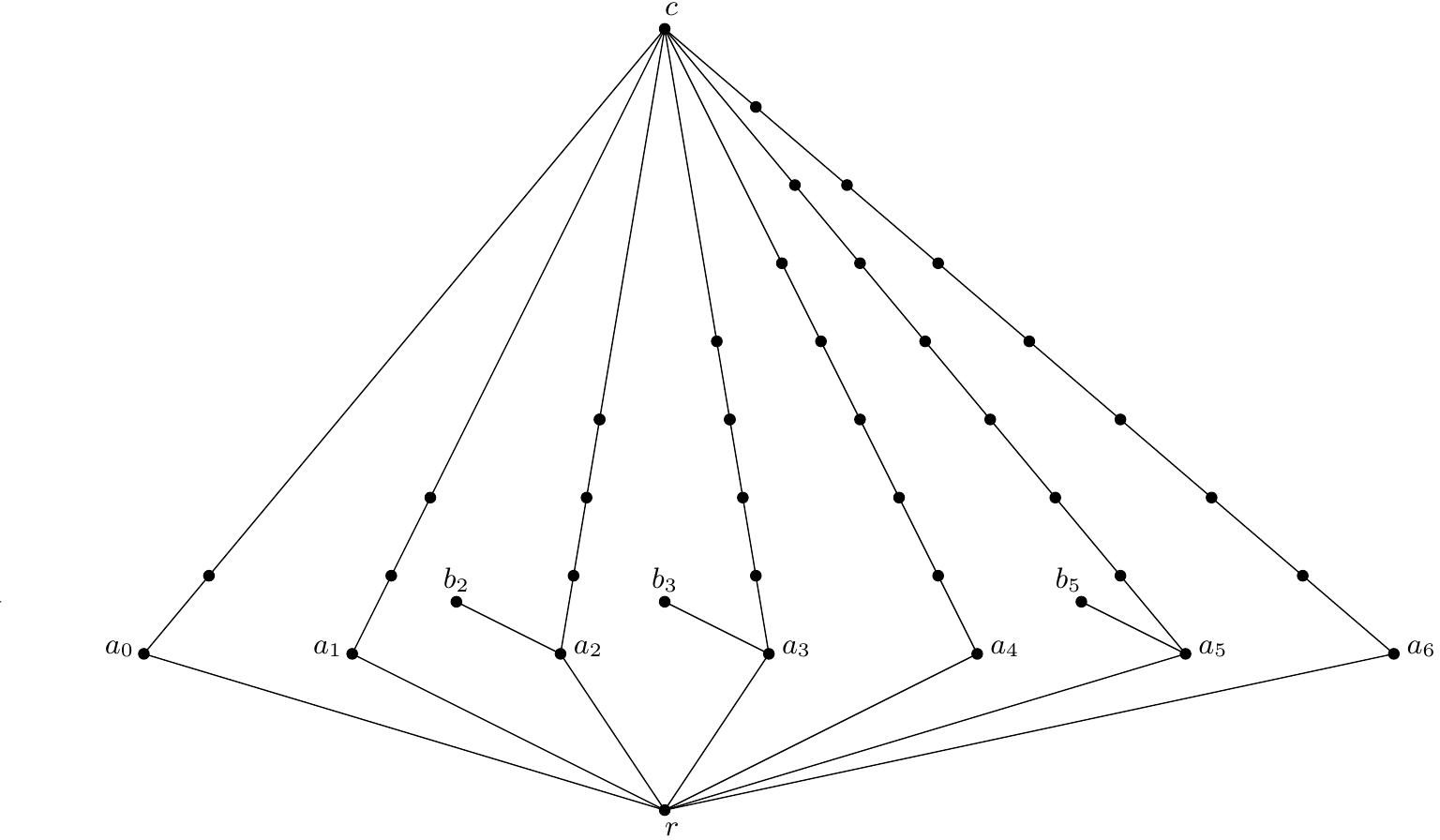}
\end{center}
\caption{A fragment of $\mathcal{T}[P]$, where $P$ is the set of all primes.}
\label{fig:tw}
\end{figure}

\smallskip

We will use the following technical lemmas:

\begin{lemma}[folklore] \label{lem:folklore} 
The set $\{ e\in\omega \,\colon W_e \subseteq \emptyset'\}$ is $\Pi^0_2$-complete.
\end{lemma}

\begin{lemma}[see Lemma~5 of~\cite{Alaev-03}] \label{lem:Alaev-01}
	Let $\mathcal{I}$ be a family, which consists of all c.e. sets $W$ such that $\emptyset' \subseteq W$ and $\mathrm{card} (W \setminus \emptyset') = 1$. The family $\mathcal{I}$ has the following properties.
	\begin{enumerate}
		\item $\mathcal{I}$ is uniformly enumerable, i.e. there is a computable function $h(x)$ such that the family $\{ W_{h(e)} \,\colon {e\in\omega}\}$ equals $\mathcal{I}$. Note that in general, the sequence $(W_{h(e)})_{e\in\omega}$ \emph{allows repetitions}.
		
		\item There is a computable function $f_{\mathcal I}(x)$ so that for every $x$, $W_{f_{\mathcal{I}}(x)} = \emptyset'\cup \{x\}$. In particular, for all $e\in \omega$:
		\begin{itemize}
			\item[(2.a)] If $W_e \subseteq \emptyset'$, then $W_{f_{\mathcal{I}}(e)} = \emptyset'$.
			
			\item[(2.b)] If $W_e \not\subseteq \emptyset'$, then $W_{f_{\mathcal{I}}(e)} \in \mathcal{I}$.
		\end{itemize}
	\end{enumerate}
\end{lemma}


\subsection*{Construction}

Choose an arbitrary set $X\in \Delta^0_3 \smallsetminus \Delta^0_2$.

Fix two binary $\Pi^0_2$ predicates $P_{+}$ and $P_{-}$ such that for any $k\in\omega$,
\begin{equation}\label{equ:002-predicate-P}
	\begin{array}{l}
		k \in X \ \Leftrightarrow\ \exists x P_{+}(k,x);\\
		k \not\in X \ \Leftrightarrow\ \exists x P_{-}(k,x).
	\end{array}
\end{equation}

Let $\circ \in \{ +,-\}$. By Lemma~\ref{lem:folklore}, there is a computable function $g_{\circ}(k,x)$ such that $P_{\circ}(k,x)$ is equivalent to the condition $W_{g_{\circ}(k,x)} \subseteq \emptyset'$. Hence, by Lemma~\ref{lem:Alaev-01}, we have:
\begin{itemize}
	\item If $P_{\circ}(k,x)$ is true, then $W_{f_{\mathcal{I}}(g_{\circ}(k,x))} = \emptyset'$.
	
	\item If $P_{\circ}(k,x)$ is false, then $W_{f_{\mathcal{I}}(g_{\circ}(k,x))} \in \mathcal{I}$.
\end{itemize}

\medskip

Consider a signature $L := L_0 \cup \{ T^1; a\}$, where $a$ is a constant symbol. For a number $k\in\omega$, we define a computable $L$-structure $\mathcal{C}_k$ as follows. If it is not specified otherwise, we assume that a freshly added element $x$ of $\mathcal{C}_k$ satisfies $\neg T(x)$.
\begin{enumerate}
	\item[(A)] Choose a fresh element $r^k$ as the root of the tree. Put $a^{\mathcal{C}_k} := r^k$.
	
	\item[(B)] For each set $W \in \mathcal{I}$, append to $r^k$ infinitely many copies $\mathcal{D}_{\ell}$, $\ell \in\omega$, of the structure $\mathcal{T}[W]$. For each of $\mathcal{D}_{2 m}$, $m\in\omega$, we set $\mathcal{C}_k \models T(q)$, where $q$ is the root of $\mathcal{D}_{2 m}$.
	
	\item[(C)] For each $\circ\in\{ +, -\}$ and each $m\in\omega$, append to $r^k$ infinitely many copies $\mathcal{E}_{\ell}$, $\ell\in\omega$, of the structure $\mathcal{T}[W_{f_{\mathcal{I}}(g_{\circ}(k,m))}]$. Let $q$ be the root of $\mathcal{E}_{\ell}$. If $\circ$ equals $+$, then set $\mathcal{C}_k \models T(q)$.
\end{enumerate}
Since the family $\mathcal{I}$ is uniformly enumerable, it is not hard to show that the structures $\mathcal{C}_k$ are uniformly computable.

Let $\mathcal{C}_{base}$ be the $L$-structure obtained by employing only the steps~(A) and~(B) described above. From~(\ref{equ:002-predicate-P}), it is not hard to establish the following:
\begin{enumerate}
	\item If $k \in X$, then  $\mathcal{C}_k$ is isomorphic to the structure $\mathcal{C}^{+}$, which is constructed by appending to $\mathcal{C}_{base}$ infinitely many copies of $\mathcal{T}[\emptyset']$ with their roots satisfying the predicate $T$.
	
	\item If $k\not\in X$, then $\mathcal{C}_k$ is isomorphic to the structure $\mathcal{C}^{-}$, which is obtained by appending to $\mathcal{C}_{base}$ infinitely many copies of $\mathcal{T}[\emptyset']$ with their roots satisfying the formula $\neg T(x)$.
\end{enumerate}


\subsection*{Verification}

Consider class $\mathfrak{K}$, containing the isomorphism types of the structures $\mathcal{C}^{+}$ and $\mathcal{C}^{-}$. We prove that the class $\mathfrak{K}$ satisfies our theorem.

We define the following $\Sigma^{\infi}_{2}$-sentences: for $\circ \in \{ +, -\}$,
\begin{multline*}
	\xi_{\circ} := \exists x \big[ R(a,x) \,\&\, T^{\circ}(x) \,\& \\ 
	\underset{i\in \overline{\emptyset'}}{\bigwedge\skipmm{5.5}\bigwedge} \forall \widehat{x} \forall y_0 \dots \forall y_i \forall z \big[ R(x,\widehat{x}) \,\&\, \theta_i(\widehat{x},y_0,\dots,y_i,z) \rightarrow\\ 
	 \neg \exists v ( R(\widehat{x},v) \,\&\, U(v) \,\&\, v\neq x )  \big]\big],
\end{multline*}
where $\theta_i$ are formulas from Eq.~(\ref{equ:001-theta}), and 
\[
	T^{\circ}(x) = \begin{cases}
		T(x), & \text{if } \circ = +,\\
		\neg T(x), & \text{if } \circ = -.
	\end{cases}
\]

The structure $\mathcal{C}^{+}$ satisfies the sentence $\xi_{+}$: the desired element $x$ can be chosen as the root of some appended tree, which is isomorphic to $\mathcal{T}[\emptyset']$. On the other hand, one can show that $\mathcal{C}^{-} \not\models \xi_{+}$.

Indeed, towards a contradiction, assume that some $x_{-} \in \mathcal{C}^{-}$ has the described properties. Then $x_{-}$ is the root of some appended tree, which is isomorphic to $\mathcal{T}[W]$ for a set $W$ belonging to the class $\mathcal{I}$. Recall that there is a unique element $\ell \in W\setminus \emptyset'$. Furthermore, there is a unique tuple $\widehat{x},y_0,\dots,y_{\ell},z$ from $\mathcal{C}^{-}$ with the property $	R(x_{-},\widehat{x}) \,\&\, \theta_{\ell}(\widehat{x},y_0,\dots,y_{\ell},z)$. Roughly speaking, in the appended tree, we have $\widehat{x} = a_{\ell}$, $y_j = d_{\ell,j}$, and $z = c^{\text{of this tree}}$. Since $\ell \in W$, the element $b_{\ell}$ belongs to the tree, therefore,
\[
	\mathcal{C}^{-} \models \exists v ( R(\widehat{x},v) \,\&\, U(v) \,\&\, v\neq x_{-} ),
\]
which gives a contradiction.

In a similar way, one shows that $\mathcal{C}^{-}\models \xi_{-}$ and $\mathcal{C}^{+}\not\models \xi_{-}$. By Theorem~\ref{theorem:characterization learning}$(a)$, we deduce that our class $\mathfrak{K}$ is learnable.

\medskip

Now, towards a contradiction, assume that the class $\mathfrak{K}$ is learnable by a computable learner $M$. Without loss of generality, one may assume the following: if a copy of the structure $\mathcal{C}^{\circ}$ (where $\circ \in \{ +,- \}$) is given as input, then in the limit, $M$ will output the symbol $\circ$.

Consider the computable sequence $(\mathcal{C}_k)_{k\in\omega}$ built in the construction. Then we have the following:
\begin{itemize}
	\item If $k\in X$, then $\mathcal{C}_k \cong \mathcal{C}^{+}$. Thus, given the data about $\mathcal{C}_k$, the learner $M$ outputs $+$ in the limit.
	
	\item If $k\not\in X$, then $\mathcal{C}_k \cong \mathcal{C}^{-}$. Given $\mathcal{C}_k$, the limit output of $M$ is equal to $-$.
\end{itemize}
Since the learner $M$ is computable, we deduce that $X$ is a $\Delta^0_2$ set, which contradicts the choice of our $X$. Therefore, $\mathfrak{K}$ is not learnable in a computable fashion. Theorem~\ref{theo:lower-bound-finite-case} is proved. 
\qed
\end{proof}

\section{Conclusion}
Let us conclude by briefly mentioning two ways of extending the above research. 

First, one may say that an oracle $X$ is \emph{low for learning structures} if any family $\mathfrak{K}$ which is learnable by an $X$-computable learner can already be learned computably.  A number of lowness notions have been investigated in computability theory (e.g., an oracle can be \emph{low for isomorphism}~\cite{franklin2014degrees}, \emph{low for bi-embeddability}~\cite{bazhenov2019degrees}, \emph{low for randomness}~\cite{nies2005lowness}, etc.). Yet, the study of oracles which do not supplement the learning power of $\mathbf{0}$  (called \emph{trivial}) is also an important tradition in algorithmic learning theory: Slaman and Solovay~\cite{slaman1991oracles} proved that, in the case of learning recursive functions, noncomputable trivial oracles coincide with the $1$-generic sets below  $\emptyset'$. So, one could try to characterize the class of oracles that are low for learning structures.

The second research direction that we want to suggest builds on the following observation: the results of this paper are all based on the fact that one could effectively recover (a copy of) a given structure $\mathcal{A}$ from knowing the conjecture $\ulcorner \A \urcorner$. What if one drops this assumption? For instance, suppose that a learner is allowed to output as conjectures \emph{any} index of the observed structure (according to some background enumeration of the computable structures). We would like to know whether the addition of multiple indices for a given structure extends the capabilities of computable learners (as is in the case of \emph{behaviourally correct} learning, see \cite{case1983comparison,jain1993non}).

\bibliographystyle{splncs04}

\end{document}